\documentclass[a4paper,11pt]{article}
\usepackage{amsmath}
\usepackage{amsfonts}
\usepackage{amssymb}
\usepackage{amsthm}

\theoremstyle{plain}
\newtheorem{theorem}{Theorem}
\newtheorem{proposition}[theorem]{Proposition}
\newtheorem{question}[theorem]{Question}

\newcommand{\prop}[1]{Proposition~\ref{#1}}
\newcommand{\qn}[1]{Question~\ref{#1}}

\newcommand{\defn}{\emph}
\newcommand{\N}{\mathbb{N}}
\newcommand{\Z}{\mathbb{Z}}
\newcommand{\Q}{\mathbb{Q}}

\newcommand{\first}{1}
\newcommand{\second}{2}

\usepackage{calc}

\title{Partition regularity with congruence conditions}
\author{Ben Barber
        \footnote{Department of Pure Mathematics and Mathematical Statistics, Centre for Mathematical
Sciences, Wilberforce Road, Cambridge, CB3 0WB, UK.  {\tt b.a.barber@dpmms.cam.ac.uk}}
        \and
        Imre Leader
        \footnote{Department of Pure Mathematics and Mathematical Statistics, Centre for Mathematical
Sciences, Wilberforce Road, Cambridge, CB3 0WB, UK.  {\tt i.leader@dpmms.cam.ac.uk}}}

\begin{document}

\maketitle

\begin{abstract}
An infinite integer matrix $A$ is called image partition regular if, whenever the
natural numbers are finitely coloured, there is an integer vector $x$ such
that $Ax$ is monochromatic. Given an image partition regular matrix $A$, can we
also insist that each variable $x_i$ is a multiple of some given $d_i$?
This is a question of Hindman, Leader and Strauss.

Our aim in this short note is to show that the answer is negative. As an
application, we disprove a conjectured equivalence between the
two main forms of partition regularity, namely image partition
regularity and kernel partition regularity.
\end{abstract}

\section{Introduction}

We call an integer matrix $A$ (possibly infinite but with only finitely many non-zero entries in each row) \defn{image partition regular} if, whenever $\N$ is finitely coloured, there is a vector $x$ with integer entries such that every entry of $Ax$ is in $\N$ and has the same colour (``$Ax$ is monochromatic'').  For example, Schur's theorem states that whenever we finitely colour the natural numbers we can find $x$ and $y$ such that $x$, $y$ and $x+y$ are all the same colour: that is, the matrix
\[
 \begin{pmatrix}
  1 & 0 \\
  0 & 1 \\
  1 & 1
 \end{pmatrix}
\]
is image partition regular.\footnote{What we call `image partition regular' is sometimes called `image partition regular over $\Z$' (or `weakly image partition regular') to indicate that the variables take values in $\Z$.}

In the finite case, partition regularity is very well understood (see \cite{Rado}, \cite{hl} and, for a general overview, \cite{Graham-Rothschild-Spencer}).  In contrast, in the
infinite case the situation is far less clear, and several very basic 
questions remain unanswered---see the survey \cite{problem-survey}.  Our aim in this note is
to give negative answers to three questions from \cite{problem-survey}.

\begin{question}[{\cite[Q8]{problem-survey}}]\label{q1}
Let $A$ be an image partition regular matrix, and $(d_i)$ be a sequence of integers.  Is it true that, whenever the natural numbers are finitely coloured, there is a monochromatic image $Ax$ such that the variables $x_i$ satisfy $x_i \equiv 0 \pmod{d_i}$?
\end{question}

[We remark that the question in \cite{problem-survey} does not take quite this form.  It is phrased in terms of `kernel partition regularity', and is restricted to systems for which the variables can always be taken to have the same colour as $Ax$.  Our counterexample for \qn{q1} will also be a counterexample for this more specialised question.]

Note that of course we cannot ask for $x_i \equiv a_i \pmod{d_i}$ for arbitrary $a_i$.  For example, in Schur's theorem we cannot ask for $x$ to be odd; for then colouring the odd numbers red and the even numbers blue forces all of $x$, $y$ and $x+y$ to be odd.

A related notion is that of kernel partition regularity.  We call a matrix $A$ \defn{kernel partition regular} if, whenever the natural numbers are finitely coloured, there is a monochromatic vector $x$ such that $Ax = 0$.  It is natural to ask whether the two
notions of image and kernel partition regularity are `essentially the same',
in a sense we now describe.

Given a matrix $A$ (with only finitely many non-zero entries in each row) we define a matrix $B(A)$, expressing the linear dependences between the rows of $A$, as follows.  Let $\{r_i : i \in I\}$ be a maximal linearly independent set of rows of $A$, and write each of the remaining rows $\{s_j : j \in J\}$ as a linear combination of the $r_i$.  Let $B(A)$ be the matrix corresponding to these linear equations.  That is, for each $j \in J$ write $s_j = \sum_{i \in I} c_{ji}r_i$, and let $B(A)$ be the $J \times (I \cup J)$ matrix with left-hand side $(c_{ji})$ and right-hand side $-1$ times the $J \times J$ identity matrix.\footnote{The reader may be worried about the case when the rows of $A$ are linearly independent.  In that case $B(A)$ is the empty matrix, which is automatically kernel partition regular.}

If $A$ is image partition regular then certainly $B(A)$ is kernel partition regular, because any monochromatic image of $A$ is in the kernel of $B(A)$.  Conversely:

\begin{question}[{\cite[Q7]{problem-survey}}]\label{q2}
If $B(A)$ is kernel partition regular, must $A$ be image partition regular?
\end{question}

Finally, we say that an integer matrix $A$ (with only finitely many non-zero entries in each row) is \defn{image partition regular over $\Q$} if, whenever $\Q\setminus\{0\}$ is finitely coloured, there is a vector $x$ with entries in $\Q$ such that $Ax$ is monochromatic.

\begin{question}[{\cite[Q9]{problem-survey}}]\label{q3}
If $A$ is image partition regular over $\Q$, must it be image partition regular?
\end{question}

The corresponding question for kernel partition regularity was answered in \cite{bhl}.

Our aim in this short note is to show that the answer to each of these questions is negative.  The three questions turn out to be related: once we have a counterexample for \qn{q1}, counterexamples for \qn{q2} and \qn{q3} will follow.

\section{The counterexample}

We seek a matrix $A$, and a sequence of natural numbers $(d_i)$, such that $A$ is image partition regular but we cannot always find a monochromatic image $Ax$ satisfying $x_i \equiv 0 \pmod{d_i}$ for each $i$.  The matrix we shall make use of comes from a recent result of Barber, Hindman and Leader \cite{bhl}.  It is simpler to give the system of expressions corresponding to its rows than to describe the image partition regular matrix explicitly.  In the following system, the variables are $y$ and $x_{ij}$, $1 \leq j \leq i$.

\begin{theorem}[{\cite[Theorem 15]{bhl}}] \label{ipr-system}
 Let $(c_n)$ be any sequence of integer coefficients.  Then the system of expressions
 \[ \renewcommand*{\arraystretch}{1.4}
  \setlength{\arraycolsep}{2em}
  \begin{array}{rcc}
                    x_{11} + c_1  y  & x_{11} & y \\
           x_{21} + x_{22} + c_2  y  & x_{21} & \\
  x_{31} + x_{32} + x_{33} + c_3  y  & x_{22} & \\
                               {\mathrel{\makebox[\widthof{$c_3 y + c_3 y$}]{\vdots}}}  & \vdots  &
  \end{array} \tag{\first}
  \label{counterexample}
 \]
 is image partition regular.
\end{theorem}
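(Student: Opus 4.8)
The plan is to realise the whole system inside a single, highly structured colour class and to invoke the Central Sets Theorem, engineering the term $c_n y$ by hand for each row. So, given a finite colouring of $\N$, I would fix a minimal idempotent $p$ in $(\beta\N,+)$; exactly one colour class $C$ lies in $p$, and every member of a minimal idempotent is central. Because I will exhibit $y$, every $x_{ij}$ and every row sum as elements of $C\subseteq\N$, monochromaticity and positivity are automatic, and it suffices to solve the system inside $C$.

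The engine is the (strong) Central Sets Theorem: for the central set $C$ it provides functions $\alpha$ and $H$, defined on the finite subsets of the family $\mathcal T$ of all integer sequences, such that for every chain $F_0\subsetneq F_1\subsetneq\cdots\subsetneq F_m$ and every choice $f_i\in F_i$ the sum $\sum_{i=0}^m\bigl(\alpha(F_i)+\sum_{t\in H(F_i)}f_i(t)\bigr)$ lies in $C$; in particular each single block $\alpha(F)+\sum_{t\in H(F)}f(t)$ lies in $C$. I would first fix the shared variable once and for all, say $y:=\alpha(\{\mathbf 0\})$, which is such a single block and hence lies in $C$.

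For each $n$ I would then build the $n$th row as a bespoke nested sum. Choose a chain $F_0\subsetneq F_1\subsetneq\cdots\subsetneq F_n$ of finite subsets of $\mathcal T$, take $x_{nj}:=\alpha(F_j)\in C$ for $1\le j\le n$ (using the zero sequence on each $F_j$), and on the smallest set $F_0$ use a sequence $\phi$ chosen so that its block equals the prescribed shift, $\alpha(F_0)+\sum_{t\in H(F_0)}\phi(t)=c_n y$; this is possible because $H(F_0)$ is non-empty, so I can load the value $c_n y-\alpha(F_0)$ onto a single coordinate. The Central Sets Theorem then guarantees that the total sum $\sum_{j=1}^n\alpha(F_j)+c_n y$ lies in $C$, and this total is exactly the $n$th expression $x_{n1}+\cdots+x_{nn}+c_n y$. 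As the rows use independent chains, this works simultaneously for all $n$; reading off the values of $y$ and the $x_{ij}$ gives an image all of whose entries lie in $C$, hence are natural numbers of a single colour.

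The point to get right---and the reason plain tools do not suffice---is the coupling through the single variable $y$, which is shared by all infinitely many rows and yet must itself be monochromatic. A bare appeal to Hindman's theorem fails, since a sum of distinct basic blocks cannot reproduce the multiplied term $c_n y$; and fixing $y$ first and then shifting fails too, because an arbitrary integer shift $c_n y$ need not keep a sum inside the colour class. What rescues the argument is precisely the flexibility of the Central Sets Theorem in its strong form: because $\alpha$ and $H$ are defined on all finite sets of sequences, each row may be handled by its own configuration, and the coefficient $c_n$ (of either sign, and unbounded as $n$ grows) is absorbed into the single engineered block $\phi$ rather than fought. Verifying that one may indeed fix $y$ in advance and then meet the block equation $\alpha(F_0)+\sum_{t\in H(F_0)}\phi(t)=c_n y$ for every $n$ is the step I would treat most carefully.
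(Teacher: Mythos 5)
You should first note that the paper does not actually prove Theorem~\ref{ipr-system}: it is quoted from \cite{bhl}, with the remark that the proof given there for $c_n=2^n$ works for an arbitrary integer sequence. So your argument has to stand on its own. Your toolkit (a minimal idempotent $p$, the central colour class $C\in p$, and the De--Hindman--Strauss form of the Central Sets Theorem) is indeed the right one, but the step you yourself flag at the end is invalid, and it is the whole proof. In that theorem, $\alpha$ and $H$ are \emph{fixed functions} on $\mathcal{P}_f(\mathcal{T})$, and the conclusion applies only to choices $f_i\in F_i$. Your engineered sequence $\phi$ is defined in terms of $\alpha(F_0)$ and $H(F_0)$, and yet it must be a \emph{member} of $F_0$ for the theorem to say anything about the block $\alpha(F_0)+\sum_{t\in H(F_0)}\phi(t)$. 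Since $\alpha(F_0)$ and $H(F_0)$ are determined by $F_0$, and replacing $F_0$ by $F_0\cup\{\phi\}$ changes those values, you would need a fixed point ($\phi\in F_0$ satisfying an equation that refers to that same $F_0$), and nothing guarantees one exists. The Central Sets Theorem never lets you prescribe the \emph{value} of a block; it only tells you that the blocks, whatever values the fixed functions $\alpha$ and $H$ assign, combine into $C$ along chains.

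Moreover, the step is not merely unjustified: it is false, so it cannot be repaired. If one could arrange $\phi\in F_0$ with $\alpha(F_0)+\sum_{t\in H(F_0)}\phi(t)=c_n y$, then the one-link chain $F_0$ would already give $c_n y\in C$; that is, your argument proves that $y,c_1y,c_2y,\dots$ all lie in a single colour class. Now colour $\N$ by the parity of $\lfloor\log_2 x\rfloor$. Then $x$ and $2x$ always receive different colours, so no colour class contains both $y$ and $2y$ for any $y$; yet one of the two classes is central, so taking, say, $c_1=2$ yields a contradiction. (The theorem itself is consistent with this colouring, because it only requires $\sum_{j}x_{nj}+c_ny$ to be in $C$, never $c_ny$ alone.) This false consequence shows that your construction has assumed away exactly the content of the theorem: a single $y$, multiplied by arbitrary constants $c_n$, must be coupled into infinitely many rows, and the $n$ free variables in row $n$ exist precisely so that the sum $\sum_j x_{nj}$ can pull $c_ny$ back into $C$. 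Any correct proof must at heart produce one $y\in C$ such that, for every $n$ simultaneously, $c_ny$ lies in $C$ minus an $n$-fold sum of elements of $C$ --- this is where \cite{bhl} directs its use of central-sets machinery, and it is where the real work lies. (There are also minor repairable issues: for $(\N,+)$ the Central Sets Theorem takes $\N$-valued sequences, so to use the zero sequence and sequences with possibly negative entries you need the $(\Z,+)$ version together with the fact that central subsets of $(\N,+)$ are central in $(\Z,+)$.) As written, the proposal collapses at the engineered block.
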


[The result in \cite{bhl} was stated for $c_n=2^n$, but, as remarked in \cite{bhl}, the proof works for any integer sequence $(c_n)$.]

Roughly speaking, our idea is to `simulate' the condition that a variable be
odd by giving some conditions on the other variables. This will be achieved
by a particular choice of the constants $c_n$ and divisibility constraints $d_{ij}$.

For every $n \in \N$ we choose $c_n \in \N$ such that
\[
 c_n n \equiv 2^{n-1} \pmod{2^n}.
\]
To see that this is possible, write $n=2^kp$, where $p$ is odd and $k < n$.  Then we seek $c_n$ such that
\[
 c_n p \equiv 2^{n-k-1} \pmod {2^{n-k}}.
\]
But $p$ is odd, hence invertible mod~$2^{n-k}$, so these congruences have solutions.  For example, we can take $c_1 = 1$, $c_2 = 1$, $c_3 = 4$, $c_4 = 2$ and $c_5 = 16$.

We now show that the system \eqref{counterexample} with this choice of coefficients provides a counterexample for \qn{q1}.

\begin{proposition} \label{N-colouring}
 There is a 2-colouring of $\N$ such that there are no integers $y$ and $x_{ij}$ such that $x_{ij} \equiv 0 \pmod {2^i}$ and the system \eqref{counterexample} is monochromatic.
\end{proposition}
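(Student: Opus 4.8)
The plan is to exhibit the colouring explicitly and then show directly that it blocks every constrained solution. I would colour each $m \in \N$ by the parity of its $2$-adic valuation: writing $v_2(m)$ for the $2$-adic valuation of $m$, set $\chi(m) = v_2(m) \bmod 2$. The entire force of the special choice of the $c_n$ is captured by a single valuation identity: since $c_n > 0$ and $c_n n \equiv 2^{n-1} \pmod{2^n}$, the product $c_n n$ equals $2^{n-1}(1 + 2t)$ for some integer $t$, so it has $2$-adic valuation exactly $n-1$, whence $v_2(c_n) = n - 1 - v_2(n)$. I would record this at the outset, as it is the only place the congruence conditions on the $c_n$ enter.

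Next I would assume, for contradiction, that there is a monochromatic image with $x_{ij} \equiv 0 \pmod{2^i}$ for all $i,j$, and analyse the first-column entries $S_i = x_{i1} + \cdots + x_{ii} + c_i y$. The divisibility hypothesis gives $2^i \mid x_{i1} + \cdots + x_{ii}$, and since these are positive integers $v_2(x_{i1} + \cdots + x_{ii}) \geq i$; meanwhile the valuation identity gives $v_2(c_i y) = (i - 1 - v_2(i)) + v_2(y)$. The idea is that, by choosing $i$ suitably, the second quantity will be strictly smaller than $i$, so that $v_2(S_i)$ is pinned down exactly by the $c_i y$ term and is insensitive to the otherwise unknown values of the $x_{ij}$.

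Concretely, let $v = v_2(y)$, which is finite since $y \in \N$. I would specialise to rows $i = 2^k$ with $k$ even and $k \geq \max(v,2)$; infinitely many such $k$ exist. Then $v_2(c_i y) = 2^k - 1 - k + v$, and since $k \geq v$ this is at most $2^k - 1 < 2^k = i$, so the two summands of $S_i$ have different valuations and $v_2(S_i) = v_2(c_i y) = 2^k - 1 - k + v$ exactly. As $k$ is even and $2^k$ is even, this quantity is congruent to $1 + v$ modulo $2$; hence $\chi(S_i) = (1+v) \bmod 2 \neq v \bmod 2 = \chi(y)$. This contradicts monochromaticity, which forces $S_i$ and $y$ to share a colour.

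The main thing to get right is the bookkeeping of valuations, together with the logical order that makes a single colouring defeat every solution at once: $\chi$ is fixed in advance, and only after a solution (hence its value $v = v_2(y)$) is presented do I select the offending row $i = 2^k$. The one point deserving genuine care is the non-cancellation in $S_i$: because $v_2$ of the $x$-sum is at least $i$ while $v_2(c_i y)$ is strictly below $i$, the valuations differ and $v_2(S_i)$ equals the smaller one exactly, so no interaction between the $x_{ij}$ and $y$ can perturb the parity. Beyond keeping these inequalities and parities straight, I do not expect any serious obstacle.
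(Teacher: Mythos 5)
Your proof is correct, but it takes a genuinely different route from the paper's. Both arguments rest on the same mechanism---since $2^i \mid x_{i1}+\cdots+x_{ii}$, the $i$th row sum satisfies $S_i \equiv c_i y \pmod{2^i}$, so the congruence imposed on $c_i$ controls the $2$-adic valuation of $S_i$---and both colourings are constant on the valuation classes $\{m : v_2(m)=j\}$. But the paper's colouring is built recursively, by diagonalisation: at stage $n$ it gives the whole class $\{m : v_2(m)=n-1\}$ the colour opposite to the colour that the integer $n$ has already received, so that row $n$ directly rules out the value $y=n$ (no valuation identity is needed, since $S_n \equiv c_n n \equiv 2^{n-1} \pmod{2^n}$ places $S_n$ in exactly the class coloured opposite to $n$). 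You instead fix the explicit, non-recursive colouring $\chi(m)=v_2(m) \bmod 2$ in advance, extract the identity $v_2(c_n)=n-1-v_2(n)$, and, given a putative solution, defeat it with a row whose index $i=2^k$ depends only on $v=v_2(y)$: taking $k$ even with $k \geq \max(v,2)$ gives $v_2(S_i)=2^k-1-k+v \equiv v+1 \pmod 2$, the bound $v_2(c_iy)<i$ guaranteeing that the $x_{ij}$ cannot interfere. What each approach buys: the paper's stage construction is more economical, in that the row indexed by $y$ itself provides the contradiction using only the congruence as stated; yours produces a cleaner, explicitly defined colouring and shows that only the congruence conditions along the sparse set of rows $n=2^k$, $k$ even, are actually needed---a single natural colouring defeats all solutions uniformly. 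Two tiny remarks: positivity of $c_n$ is not needed for your valuation identity, since $c_n n = 2^{n-1}(1+2t)$ with $1+2t \neq 0$ already has valuation exactly $n-1$; and the positivity of the $x_{ij}$ that you invoke follows, as the paper notes explicitly, from the fact that the $x_{ij}$ are themselves entries of the image, hence lie in $\N$.
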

\begin{proof}
 Note first that all the variables must be positive, as they belong to the image, which is in $\N$.  We shall define the colouring in stages, so that at the $n$th stage we force $n$ and the $n$th expression in the first column to have different colours.  This rules out the possible values of $y$ one by one.

 For $n=1$ we colour 1 red and all the other odd numbers blue.  Since $x_{11}$ must be even and positive, and $c_1$ is odd, the first expression must be blue.

 For $n=2$ we colour 2 red and all the other numbers that are 2 mod 4 blue.  Since $x_{21}$ and $x_{22}$ are 0 mod 4 and positive, and $2 c_2 \equiv 2 \pmod 4$, the second expression must be blue.

 For $n=3$, the number 3 has already been coloured blue.  So we colour every number that is 4 mod 8 red.  Since $x_{31} \equiv x_{32} \equiv x_{33} \equiv 0 \pmod 8$, and $3 c_3 \equiv 4 \pmod 8$, the third expression must be red.

 Continue.  At the $n$th stage, $n$ has already received some colour.  Give all numbers which are $2^{n-1}$ mod $2^n$ the opposite colour to $n$.  Since $x_{n1} \equiv \cdots \equiv x_{nn} \equiv 0 \pmod {2^n}$, and $c_n n \equiv 2^{n-1} \pmod{2^n}$, the $n$th expression must have the opposite colour to $n$.
\end{proof}

We now turn to \qn{q2}.  We will use a reformulation of \prop{N-colouring} to obtain a counterexample.  Consider the system below, obtained by reparameterising \eqref{counterexample} by setting $x_{ij} = 2^iz_{ij}$.
 \[ \renewcommand*{\arraystretch}{1.4}
  \setlength{\arraycolsep}{2em}
  \begin{array}{rcc}
                                 2z_{11} + c_1 y & 2z_{11} & y \\
                      4z_{21} + 4z_{22} + c_2 y & 4z_{21} \\
           8z_{31} + 8z_{32} + 8z_{33} + c_3 y & 4z_{22} \\
                                           {\mathrel{\makebox[\widthof{$c_n y + c_n y$}]{\vdots}}} & \vdots \\
         2^nz_{n1} + \ldots + 2^nz_{nn} + c_n y & 2^iz_{ij} \\
                                           {\mathrel{\makebox[\widthof{$c_n y + c_n y$}]{\vdots}}} & \vdots
  \end{array} \tag{\second}
  \label{ZvQ}
 \]
The system \eqref{ZvQ} is not image partition regular, as if it were then taking $x_{ij} = 2^iz_{ij}$ would contradict \prop{N-colouring}.  But if \eqref{counterexample} is represented by the matrix $A_\first$ and \eqref{ZvQ} is represented by the matrix $A_\second$, then $B(A_\first) = B(A_\second)$, so image partition regularity of $A$ cannot be determined by examining the matrix $B(A)$.  (In fact, $B(A_\first)$ is kernel partition regular, which is the main result of \cite{bhl}.)

Finally, we observe that \eqref{counterexample} and \eqref{ZvQ} have the same images over $\Q$.  So \eqref{ZvQ} is not image partition regular but is image partition regular over $\Q$, providing a counterexample for \qn{q3}.

\end{document}